\newtheorem{thm}{Theorem}[section]
\newcommand{\thmref}[1]{Theorem~\ref{#1}}
\theoremstyle{remark}
\newtheorem{rmk}{Remark}[section]
\begin{document}

\title[Evaluation of convolution sums]
{Evaluation of the convolution sums $\sum_{l+15m=n} \sigma(l) \sigma(m)$ and $\sum_{3l+5m=n} \sigma(l) \sigma(m)$ 
and an application} 
\author{B. Ramakrishnan and Brundaban Sahu}
\address[B. Ramakrishnan]{Harish-Chandra Research Institute, 
       Chhatnag Road, Jhunsi,
     Allahabad -     211 019,
   India.}
\address[Brundaban Sahu]
{School of Mathematical Sciences, National Institute of Science 
Education and Research,
PO: Sainik School,
Bhubaneswar, Odisha - 751 005,
India.}

\email[B. Ramakrishnan]{ramki@hri.res.in}
\email[Brundaban Sahu]{brundaban.sahu@niser.ac.in}

\subjclass[2010]{Primary 11A25, 11F11; Secondary 11E20, 11E25, 11F20}
\keywords{convolution sums; modular forms; quasimodular forms; number of representations by a quadratic form}

\dedicatory{Dedicated to Srinivasa Ramanujan on the occasion of his 125th birth anniversary}

\begin{abstract}
We evaluate the convolution sums $\sum_{l,m\in {\mathbb N}, {l+15m=n}} \sigma(l) \sigma(m)$ and 
$\sum_{l,m\in {\mathbb N}, {3l+5m=n}} \sigma(l) \sigma(m)$ for all $n\in {\mathbb N}$ using the theory of quasimodular forms and use 
these convolution sums to determine the number of representations of a positive integer $n$ by the form 
$$
x_1^2 + x_1x_2 + x_2^2 + x_3^2 + x_3x_4 + x_4^2 + 5 ( x_5^2 + x_5x_6 + x_6^2 + x_7^2 + x_7x_8 + x_8^2).
$$
We also determine the number of representations of positive integers by the quadratic form
$$
x_1^2 + x_2^2+x_3^2+x_4^2 + 6 (x_5^2+x_6^2+x_7^2+x_8^2),
$$
by using the convolution sums obtained earlier by Alaca, Alaca and Williams \cite{{aw3}, {aw4}}.
\end{abstract}


\maketitle

\section{Introduction}
Following \cite{{w1}, {royer}}, for $n,N\in {\mathbb N}$, we 
define $W_N(n)$ as follows.
\begin{equation}\label{WN}
W_N(n) = \sum_{m<n/N}\sigma(m) \sigma(n-Nm),
\end{equation}
where $\sigma_r(n)$ is the sum of the $r$-th powers of the divisors of $n$. We write $\sigma_1(n) =\sigma(n)$. Also, following \cite{aw1}, 
we define $W_{a,b}(n)$ for $a,b\in {\mathbb N}$ by 
\begin{equation}
W_{a,b}(n) := \displaystyle{\sum_{l,m \atop{al+bm=n}}} \sigma(l)\sigma(m).
\end{equation}
Note that $W_{1,N}(n) = W_{N,1}(n) = W_N(n)$. 
These type of sums were evaluated as early as the 19th century. For example, the sum $W_1(n)$ was evaluated by Besge, Glaisher and 
Ramanujan \cite{{besge}, {glaisher}, {ramanujan}}. 

The convolution sums $W_N(n)$ (for $1\le N\le 24$  with a few exceptions) and $W_{a,b}(n)$ for $(a,b)\in\{(2,3),$ $(3,4), (3,8), (2,9)\}$ 
have been evaluated  by using either elementary methods or analytic methods (which use ideas of Ramanujan)  or algebraic methods (using 
quasimodular forms)  (cf. \cite{{besge}, {glaisher}, {ramanujan}, {macmahon}, {lahiri1}, {lahiri2}, {melfi}, {huard}, {aw1}, {aw2}, {aw3}, {aw4}, {aw5}, {aw6}, {cw1}, {cw2}, {lw}, {w1}, {w2}, {royer}}).  Evaluation of these convolution sums has been applied to find the number of representations of integers by certain quadratic forms (cf. \cite{{huard}, {aw1}, {aw2}, {aw3}, {aw4}, {aw5}, {w1}, {w2}}). In \cite{royer}, Royer used the theory 
of quasimodular forms, especially the structure of the space of quasimodular forms (see Eq. \eqref{structure} below), to evaluate the convolution sums $W_N(n)$ for $1\le N \le 14$, except for $N=12$. For a list of evaluation of the convolution sums $W_N(n)$, we refer the reader to Table 1 in \cite{royer}. 
In this article, following the method of Royer, we evaluate the convolution sums $W_{15}(n)$ and 
$W_{3,5}(n)$ by using the theory of quasimodular forms. The evaluation of these convolution sums is then used to determine the number 
of representations of a positive integer by a certain quadratic form. More precisely, we use these convolution sums to determine the number of representations of integers by the quadratic form 
$x_1^2 + x_1x_2 + x_2^2 + x_3^2 + x_3x_4 + x_4^2 + 5 ( x_5^2 + x_5x_6 + x_6^2 + x_7^2 + x_7x_8 + x_8^2)$.  We also give a formula 
for the number of representations of integers by the quadratic forms $x_1^2 + x_2^2+x_3^2+x_4^2 + k(x_5^2+x_6^2+x_7^2+x_8^2)$,  
$k=3,6$, by using the convolution sums $W_{3}(n)$, $W_6(n)$, $W_{12}(n)$, $W_{24}(n)$, $W_{2,3}(n)$ and $W_{3,4}(n)$ evaluated by  
K. S. Williams and his co-authors \cite{{aw1}, {aw3}, {aw4}, {huard}}. The formula for $k=3$ was obtained by Alaca-Williams \cite{aw7}, where 
the terms corresponding to the cusp forms are different from our formula. The referee has informed us that the evaluation when $k=6$  
has been carried out in a similar manner by K\"okl\"uce. 


\section{Evaluation of $W_{a,b}(n)$ and some applications}

\subsection{Evaluation of $W_{15}(n)$ and $W_{3,5}(n)$}  
In this section, following Royer \cite{royer}, we evaluate the convolution sums $W_{15}(n)$ and $W_{3,5}(n)$ by using the theory of quasimodular forms. As an application, we use these convolution sums together with the convolution sum $W_5(n)$ derived by Lemire and Williams \cite{lw} to obtain a formula for the number of representations of a positive integer $n$ by the quadratic form $Q$ given by:
\begin{equation}\label{Q}
Q: x_1^2 + x_1x_2 + x_2^2 + x_3^2 + x_3x_4 + x_4^2 + 5 ( x_5^2 + x_5x_6 + x_6^2 + x_7^2 + x_7x_8 + x_8^2).
\end{equation}
Let 
\begin{equation}\label{new5}
\begin{split}
\Delta_{4, 5}(z) &= [\Delta(z)\Delta(5z)]^{1/6} ~=~ \eta^4(z) \eta^4(5z) ~=~ q \prod_{n=1}^\infty (1-q^n)^4 (1-q^{5n})^4\\ 
&= \sum_{n\ge 1} \tau_{4,5}(n) q^n 
\end{split}
\end{equation}
be the normalized newform of weight $4$ on $\Gamma_0(5)$ (see \cite{royer}), where $q= e^{2\pi iz}$. In the above, $\eta(z) = q^{1/24}\prod_{n=1}^\infty (1-q^n)$ is the Dedekind eta-function and the Ramanujan function $\Delta(z) = \eta^{24}(z)$ is the normalized 
cusp form of weight $12$ on the full modular group $SL_2({\mathbb Z})$. The following theorem was proved by Lemire and Williams \cite{lw}:
\begin{thm}\label{thm:lw}
\begin{equation}
W_5(n)  = \frac{5}{312}\sigma_3(n)+\frac{125}{132}\sigma_3\left(\frac{n}{5}\right)+\frac{5-6n}{120} \sigma(n) +\frac{1-6n}{24} 
\sigma\left(\frac{n}{5}\right)-\frac{1}{130}\tau_{4, 5}(n).
\end{equation}
\end{thm}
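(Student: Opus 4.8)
The plan is to identify both sides of the claimed identity as coefficients of explicit elements of the space of quasimodular forms of weight $4$ on $\Gamma_0(5)$, and then verify that the two elements coincide. Recall that if $E_2(z)=1-24\sum_{n\ge1}\sigma(n)q^n$, then the normalized Eisenstein series $\phi_k(z)=E_2(z)E_2(kz)$-type products and the ordinary weight-$4$ Eisenstein series provide a spanning set. Concretely, one knows that the generating function $\sum_{n\ge 1}W_5(n)q^n$ arises (up to elementary Eisenstein corrections) from the product $(E_2(z)-5E_2(5z))^2$ or equivalently from $E_2(z)E_2(5z)$, which is a quasimodular form of weight $4$ and depth $2$ on $\Gamma_0(5)$. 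By the structure theorem for quasimodular forms used by Royer, this space decomposes as $M_4(\Gamma_0(5))\oplus D M_2(\Gamma_0(5))\oplus \C\, E_2$-type pieces, where $D=q\frac{d}{dq}=\frac{1}{2\pi i}\frac{d}{dz}$; here $D$ raises weight by $2$, so $D$ of a weight-$2$ form lands in weight $4$.

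First I would write down a basis. The space $M_4(\Gamma_0(5))$ is two-dimensional, spanned by the two weight-$4$ Eisenstein series $E_{4}(z)$ and $E_4(5z)$ (whose coefficients involve $\sigma_3(n)$ and $\sigma_3(n/5)$); there are no weight-$4$ cusp forms on $\Gamma_0(5)$... except note that $\Delta_{4,5}=\eta^4(z)\eta^4(5z)$ \emph{is} the newform in $S_4(\Gamma_0(5))$, so in fact $\dim M_4(\Gamma_0(5))=3$ and the $\tau_{4,5}(n)$ term is the cusp-form contribution. Next, $M_2(\Gamma_0(5))$ is one-dimensional, spanned by the Eisenstein series $5E_2(5z)-E_2(z)$, whose $q$-expansion has $n$-th coefficient a multiple of $\sigma(n)-5\sigma(n/5)$ (plus constant term); applying $D$ to it produces coefficients of the shape $n\sigma(n)-5n\sigma(n/5)$, which explains the $n\sigma(n)$ and $n\sigma(n/5)$ terms in the formula. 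Finally the depth-$2$ part contributes a single extra basis vector, e.g. $E_2(z)E_2(5z)$ itself, modulo the lower-depth pieces.

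Then the computation proceeds as follows: express $E_2(z)E_2(5z)$ (whose coefficients encode $W_5(n)$ after extracting the diagonal $\sigma(m)\sigma(n)$ and $\sigma(5m)\sigma(n)$ terms and the $\sigma_3$ pieces coming from $E_4$) as an explicit linear combination
$$
E_2(z)E_2(5z) = a\,E_4(z) + b\,E_4(5z) + c\,\Delta_{4,5}(z) + d\,D\bigl(5E_2(5z)-E_2(z)\bigr) + (\text{depth-2 term}),
$$
and determine the rational constants $a,b,c,d$ by comparing finitely many $q$-coefficients — enough to pin down a vector in a space of known (small) dimension. Once the constants are fixed, reading off the $q^n$-coefficient of both sides and solving the resulting linear relation for $W_5(n)$ yields exactly the stated formula, with $\frac{5}{312}$, $\frac{125}{132}$, $\frac{5-6n}{120}$, $\frac{1-6n}{24}$, and $-\frac{1}{130}$ emerging as the corresponding combinations of $a,b,c,d$ and the normalizing factors $1-24$, etc.

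The main obstacle will be the bookkeeping around the depth filtration: one must be careful that $E_2(z)E_2(5z)$ genuinely lies in the quasimodular space and that the chosen basis — including the $D$ of the weight-$2$ Eisenstein series and the depth-$2$ generator — is linearly independent, so that matching a sufficient (but finite) number of initial Fourier coefficients \emph{forces} the identity for all $n$ rather than merely for small $n$. Verifying independence and getting the constant terms (the $n=0$ normalization, which fixes the $\sigma$-coefficients' linear-in-$n$ parts) exactly right is the delicate step; the rest is routine linear algebra over $\mathbb{Q}$.
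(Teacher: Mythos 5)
The paper does not actually prove this theorem: it is quoted verbatim from Lemire--Williams \cite{lw}, and the authors use it as an input. That said, your strategy is precisely the Royer quasimodular-form method that the paper itself employs to prove its own Theorem~\ref{w15-35} (the level-$15$ analogue), and Royer's paper already evaluates $W_5$ this way; so your proposal is a correct and self-contained route to the statement. The key facts you need all check out: $\dim M_4(\Gamma_0(5))=3$ (two Eisenstein series $E_4(z),E_4(5z)$ plus the newform $\Delta_{4,5}=\eta^4(z)\eta^4(5z)$, as you eventually say), $\dim M_2(\Gamma_0(5))=1$ (spanned by $\Phi_{1,5}=\tfrac14(5E_2(5z)-E_2(z))$), and the structure theorem gives $\tilde{M}_4^{\le 2}(\Gamma_0(5))=M_4(\Gamma_0(5))\oplus DM_2(\Gamma_0(5))\oplus \C\, DE_2$, a $5$-dimensional space containing $E_2(z)E_2(5z)$ (products of quasimodular forms are quasimodular with weights and depths adding). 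Two small points to tighten: the ``depth-$2$ term'' in your ansatz should be identified concretely as $DE_2(z)$, the generator the structure theorem actually supplies (its coefficients $-24n\sigma(n)$, together with those of $D\Phi_{1,5}$, are what produce the linear-in-$n$ parts of the formula); and the claim that $(E_2(z)-5E_2(5z))^2$ is ``equivalent'' to $E_2(z)E_2(5z)$ is only true after subtracting the $E_2(z)^2$ and $E_2(5z)^2$ terms, which encode $W_1(n)$ and $W_1(n/5)$ --- it is cleaner to work with the cross term $E_2(z)E_2(5z)$ directly, whose $n$-th coefficient is $576\,W_5(n)-24\sigma(n)-24\sigma(n/5)$. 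Finally, a useful by-product of actually running your computation: the coefficient of $\sigma_3(n/5)$ comes out as $\tfrac{125}{312}$, not $\tfrac{125}{132}$ as printed in the statement (test $n=5$: $W_5(5)=0$ forces $\tfrac{125}{312}$), so the displayed theorem contains a typo that your method would detect and correct.
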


In order to evaluate $W_{15}(n)$ and $W_{3,5}(n)$, we use the structure theorem on quasimodular forms of weight $k$ and depth $\le k/2$. 
Let $k\ge 2$ and $N\ge 1$ be natural numbers. Let $M_k(\Gamma_0(N))$ denote the ${\mathbb C}$-vector space of modular forms of weight $k$ on the congruence subgroup $\Gamma_0(N)$. For details on modular forms of integral weight we refer the reader to \cite{serre, shimura, 1-2-3}. We now define quasimodular forms. A  complex valued holomorphic function $f$ defined on the upper half-plane ${\mathcal H}$ is called a quasimodular form of weight $k$, depth $s$ ($s$ is a non-negative integer), if  there exist holomorphic functions $f_0, f_1, \ldots 
f_s$ on ${\mathcal H}$ such that 
$$
(cz+d)^{-k} f\left(\frac{az+b}{cz+d}\right) = \sum_{i=0}^s f_i(z) \left(\frac{c}{cz+d}\right)^i,
$$
for all $\begin{pmatrix} a&b \\ c& d\\ \end{pmatrix} \in \Gamma_0(N)$ and such that $f_s$ is holomorphic at the cusps and not 
identically vanishing. It is a fact that the depth of a quasimodular form of weight $k$ is less than or equal to $k/2$. For details on 
quasimodular forms we refer to \cite{kz, mr, 1-2-3}. The Eisenstein series $E_2$, which is a  
quasimodular form of weight $2$, depth $1$ on $SL_2({\mathbb Z})$ is given by 
$$
E_2(z) = 1 -24\sum_{n\ge 1} \sigma(n) e^{2\pi inz}
$$
and this fundamental quasimodular form will be used in our results. The space of quasimodular forms of weight $k$, depth $\le k/2$ on $\Gamma_0(N)$ is denoted by $\tilde{M}_k^{\le k/2}(\Gamma_0(N))$. We need the following structure theorem (see \cite{{kz},{mr}}). For 
an even integer $k$ with $k\ge 2$, we have 
\begin{equation}\label{structure}
\tilde{M}_k^{\le k/2}(\Gamma_0(N)) = \bigoplus_{j=0}^{k/2 -1} D^jM_{k-2j}(\Gamma_0(N)) \oplus {\mathbb C} D^{k/2 -1}E_2,
\end{equation}
where the differential operator $D$ is defined by $D := \frac{1}{2\pi i} \frac{d}{dz}$. Using this one can express each quasimodular form 
of weight $k$ and depth $\le k/2$ as a linear combination of $j$-th derivatives of modular forms of weight $k-2j$ on $\Gamma_0(N)$, $0\le j\le k/2-1$ and the $(k/2 -1)$-th derivate of the quasimodular form $E_2$. 

We need the following newforms of weights $2$ and $4$ on $\Gamma_0(15)$ in order to use the structure of the space $\tilde{M}_4^{\le 2}(\Gamma_0(15))$ to prove our theorem. These newforms are either eta-products or eta-quotients or linear combinations of these.  
Define the functions $\Delta_{4,15;j}(z)$, $j=1,2$ and $\Delta_{2,15}(z)$ as follows.
\begin{equation}\label{new15}
\begin{split}
\Delta_{4,15;1}(z) & :=  \Delta_{4,5}(z) + 9 \Delta_{4,5}(3z) + 5 \Delta_{2,15}(z)^2 + 2\frac{\eta(z)^5\eta(15z)^5}{\eta(3z)\eta(5z)} \\
&=  \sum_{n\ge 1} \tau_{4,15;1}(n) q^n,\\
\Delta_{4,15;2}(z) & :=  \Delta_{4,5}(z) + 9 \Delta_{4,5}(3z) + 7 \Delta_{2,15}(z)^2 = \sum_{n\ge 1} \tau_{4,15;2}(n) q^n,  
\end{split}
\end{equation}
\begin{equation}\label{new2:15}
\Delta_{2,15}(z) = \eta(z)\eta(3z)\eta(5z)\eta(15z),
\end{equation} 
where $\Delta_{4,5}(z)$ is the newform given by \eqref{new5}. 
In \cite{dummit, gordon}, conditions are given in order to determine the modularity of an eta-quotient (with weight, level and character). 
Using these conditions, it follows that the functions $\Delta_{4,15;j}(z)$, $j=1,2$ are cusp forms of weight $4$ on $\Gamma_0(15)$ and 
$\Delta_{2,15}(z)$ is a cusp form of weight $2$ on $\Gamma_0(15)$. We now show that these cusp forms are newforms in the respective 
spaces of cusp forms. A theorem of J. Sturm \cite{sturm} states that the Fourier coefficients upto $\frac{k}{12} \times i_N$ determines a modular form of weight $k$ on $\Gamma_0(N)$,  where $i_N$ denotes the index of $\Gamma_0(N)$ in $SL_2({\mathbb Z})$.  The first few Fourier coefficients of newforms of given weight and level are obtained using the 
database of $L$-functions, modular forms, and related objects (see \cite{lmfdb}). Comparing the Fourier coefficients obtained from the 
database with the Fourier coefficients of the cusp forms defined in \eqref{new15}, \eqref{new2:15}, we conclude that the forms 
$\Delta_{4,15;j}(z)$, $j=1,2$ and $\Delta_{2,15}(z)$ are newforms. 

The following are the main theorems of this section. 
\begin{thm}\label{w15-35}
Let $n \in \mathbb N,$ then
\begin{eqnarray*}
W_{15}(n) &=& \frac{1}{624}\sigma_3(n) +\frac{3}{208}\sigma_3\left(\frac{n}{3}\right) +\frac{25}{624}\sigma_3\left(\frac{n}{5}\right) 
+\frac{75}{208}\sigma_3\left(\frac{n}{15}\right) \\
&& ~~ + \frac{5-2n}{120}\sigma(n)+\frac{1-6n}{24}\sigma\left(\frac{n}{15}\right)-\frac{1}{455}\tau_{4,5}(n)-\frac{9}{455}\tau_{4,5}\left(\frac{n}{3}\right)\\
&& \quad -\frac{1}{84}\tau_{4, 15; 1}(n)-\frac{1}{80}\tau_{4, 15; 2}(n),\\
W_{3, 5}(n) &=& \frac{1}{624}\sigma_3(n)+\frac{3}{208}\sigma_3\left(\frac{n}{3}\right) +\frac{25}{624}\sigma_3\left(\frac{n}{5}\right) +\frac{75}{208}\sigma_3\left(\frac{n}{15}\right) \\
&& ~~+ \frac{5-6n}{120}\sigma\left(\frac{n}{3}\right)+\frac{1-2n}{24}\sigma\left(\frac{n}{5}\right) - \frac{1}{455}\tau_{4,5}(n)
-\frac{9}{455}\tau_{4,5}\left(\frac{n}{3}\right) \\
&& \qquad -\frac{1}{84} \tau_{4, 15; 1}(n)+\frac{1}{80}\tau_{4, 15; 2}(n).
\end{eqnarray*}
\end{thm}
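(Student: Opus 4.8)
The plan is to realise the generating functions of $W_{15}(n)$ and $W_{3,5}(n)$ as explicit elements of the finite-dimensional space $\tilde{M}_4^{\le 2}(\Gamma_0(15))$ and then expand them via the structure theorem \eqref{structure}. Put $\Phi_{1,15}(z):=E_2(z)E_2(15z)$ and $\Phi_{3,5}(z):=E_2(3z)E_2(5z)$. Since $E_2(tz)$ is a quasimodular form of weight $2$ and depth $1$ on $\Gamma_0(t)$, and hence on $\Gamma_0(15)$ whenever $t\mid 15$, each of $\Phi_{1,15}$ and $\Phi_{3,5}$ is a quasimodular form of weight $4$ and depth $\le 2$ on $\Gamma_0(15)$. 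Using $E_2(z)=1-24\sum_{n\ge 1}\sigma(n)q^n$ and the convention $\sigma(x)=0$ for $x\notin{\mathbb N}$, a direct multiplication gives
\[
\Phi_{1,15}(z)=1-24\sum_{n\ge 1}\bigl(\sigma(n)+\sigma(n/15)\bigr)q^n+576\sum_{n\ge 1}W_{15}(n)q^n,
\]
\[
\Phi_{3,5}(z)=1-24\sum_{n\ge 1}\bigl(\sigma(n/3)+\sigma(n/5)\bigr)q^n+576\sum_{n\ge 1}W_{3,5}(n)q^n,
\]
so that evaluating $W_{15}(n)$ and $W_{3,5}(n)$ is the same as writing down the $q$-expansions of $\Phi_{1,15}$ and $\Phi_{3,5}$ explicitly.

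Next I would make \eqref{structure} explicit for $k=4$, $N=15$:
\[
\tilde{M}_4^{\le 2}(\Gamma_0(15))=M_4(\Gamma_0(15))\ \oplus\ DM_2(\Gamma_0(15))\ \oplus\ {\mathbb C}\,DE_2 .
\]
The Eisenstein part of $M_4(\Gamma_0(15))$ is spanned by $E_4(dz)$, $d\mid 15$ (with $E_4=1+240\sum_{n\ge 1}\sigma_3(n)q^n$), while $S_4(\Gamma_0(15))$ is $4$-dimensional, spanned by the oldclasses $\Delta_{4,5}(z)$, $\Delta_{4,5}(3z)$ from level $5$ together with the two newforms $\Delta_{4,15;1}(z)$, $\Delta_{4,15;2}(z)$ of \eqref{new15}; and $M_2(\Gamma_0(15))$ is $4$-dimensional, spanned by $\Delta_{2,15}(z)$ (which spans $S_2(\Gamma_0(15))$) together with three weight-$2$ Eisenstein series built from $E_2(z),E_2(3z),E_2(5z),E_2(15z)$, say $E_2(z)-dE_2(dz)$ for $d\in\{3,5,15\}$. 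Thus $\tilde{M}_4^{\le 2}(\Gamma_0(15))$ is $13$-dimensional, and there are uniquely determined constants, depending on $(a,b)\in\{(1,15),(3,5)\}$, with
\[
\Phi_{a,b}(z)=\sum_{d\mid 15}x_d\,E_4(dz)+\sum_{i=1}^{4}c_i\,F_i(z)+y\,DE_2(z)+\sum_{d\in\{3,5,15\}} z_d\,D\bigl(E_2(z)-dE_2(dz)\bigr)+c_5\,D\Delta_{2,15}(z),
\]
where $F_1=\Delta_{4,5}$, $F_2(z)=\Delta_{4,5}(3z)$, $F_3=\Delta_{4,15;1}$, $F_4=\Delta_{4,15;2}$.

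To determine these $13$ constants I would compare Fourier coefficients. On the right-hand side the coefficient of $q^n$ is read off from $E_4(dz)\mapsto 240\,\sigma_3(n/d)$; $DE_2\mapsto -24\,n\sigma(n)$; $D\bigl(E_2(z)-dE_2(dz)\bigr)\mapsto -24\,n\sigma(n)+24\,n\sigma(n/d)$; $D\Delta_{2,15}\mapsto n$ times the $n$-th Fourier coefficient of $\Delta_{2,15}$; and the $F_i$ contribute their respective coefficients $\tau_{4,5}(n)$, $\tau_{4,5}(n/3)$, $\tau_{4,15;1}(n)$, $\tau_{4,15;2}(n)$. Equating these with the coefficients of $\Phi_{a,b}$ from the first step for $n=1,2,\dots$ produces a linear system; comparing enough coefficients (at least $13$, and in any case past the Sturm bound $\tfrac{4}{12}[SL_2({\mathbb Z}):\Gamma_0(15)]=8$ for the modular components) pins down all the constants and upgrades the relation to an identity of quasimodular forms, hence valid in every Fourier coefficient. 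Reading off the coefficient of $q^n$ in the resulting identity, solving for $576\,W_{a,b}(n)$, moving the elementary $\sigma$-terms from the left across, dividing by $576$, and gathering the $\sigma(\cdot)$ and $n\sigma(\cdot)$ contributions into the coefficients that are linear in $n$ yields the two stated formulas.

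I expect the main obstacle to be computational rather than conceptual: one must confirm $\dim S_4(\Gamma_0(15))=4$ and $\dim S_2(\Gamma_0(15))=1$ and check that the eta-quotients of \eqref{new15}, \eqref{new2:15} together with the level-$5$ oldclasses genuinely form bases of these cusp subspaces (using the modularity criteria of \cite{dummit, gordon} and Sturm's theorem, as already done above), and then carry out the $13\times13$ linear algebra accurately enough that every rational coefficient comes out exactly as claimed. The structure theorem \eqref{structure} supplies the conceptual content; the rest is bookkeeping.
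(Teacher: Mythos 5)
Your proposal is correct and follows essentially the same route as the paper: both express $E_2(z)E_2(15z)$ and $E_2(3z)E_2(5z)$ as elements of $\tilde{M}_4^{\le 2}(\Gamma_0(15))$, decompose them via the structure theorem against the same Eisenstein/newform bases, and read off $W_{15}(n)$ and $W_{3,5}(n)$ by comparing Fourier coefficients. The only caveat is notational: you reuse $\Phi_{1,15}$ for the product $E_2(z)E_2(15z)$, whereas the paper reserves $\Phi_{a,b}$ for the weight-$2$ Eisenstein combinations $\frac{1}{b-a}(bE_2(bz)-aE_2(az))$.
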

\begin{proof}
Let $E_k$ denote the normalized Eisenstein series of weight $k$ on $SL_2({\mathbb Z})$ (see \cite{serre} for details). When $k=4$, the 
Eisenstein series $E_4$ has the following Fourier expansion. 
\begin{equation}
E_4(z) = 1 + 240 \sum_{n\ge 1} \sigma_3(n) q^n. 
\end{equation}
Using the structure of $\tilde{M}_4^{\le 2}(15)$ from \eqref{structure}, we get 
\begin{equation}\label{structure1} 
\tilde{M}_4^{\le 2}(\Gamma_0(15))= M_4(\Gamma_0(15))\oplus DM_2(\Gamma_0(15)) \oplus {\mathbb C}DE_2.
\end{equation}
Using the dimension formula (see for example \cite{martin, shimura}), it follows that the 
vector space $M_4(\Gamma_0(15))$ has dimension $8$ (a basis of this vector space contains $4$ non-cusp forms and 
$4$ cusp forms) and the vector space $M_2(\Gamma_0(15))$ has dimension $4$ (a basis of  this space contains $3$ non-cusp forms and 1 
cusp form). Now, it is easy to see that the set 
$$
\{E_4(z), E_4(3z), E_4(5z), E_4(15z),  \Delta_{4,5}(z), \Delta_{4,5}(3z), \Delta_{4,15;1}(z), \Delta_{4,15;2}(z)\}
$$
forms a basis of the space $M_4(\Gamma_0(15))$ and the set 
$$
\{\Phi_{1, 15}(z), \Phi_{5, 15}(z), \Phi_{1, 3}(z), \Delta_{2, 15}(z)\}
$$ 
forms a basis of the space $M_2(\Gamma_0(15))$, where
\begin{equation}
\Phi_{a, b}(z):= \frac{1}{b-a}(bE_2(bz)-aE_2(az)).
\end{equation}
Consider the quasimodular form $E_2(z) E_2(15z)$ which belongs to $\tilde{M}^{\le 2}_4(\Gamma_0(15))$. Therefore, using \eqref{structure1} 
and the bases mentioned above, we have
\begin{eqnarray*}
E_2(z)E_2(15z) & = & \frac{1}{260}E_4(z)+ \frac{9}{260}E_4(3z)+ \frac{5}{52}E_4(5z)+ \frac{45}{52}E_4(15z)  \\
&& ~~ - \frac{576}{455}\Delta_{4,5}(z)- \frac{5184}{455}\Delta_{4,5}(3z) -\frac{48}{7}  \Delta_{4, 15; 1}(z) \\
&& ~~ -\frac{36}{5}\Delta_{4, 15; 2}(z) + \frac{28}{5} D\Phi_{1, 15}(z)+\frac{4}{5}DE_2(z).
\end{eqnarray*}
Similarly, considering $E_2(3z)E_2(5z)$, which is a quasimodular form of weight $4$, depth $2$ and level $15$, we get 
\begin{eqnarray*}
E_2(3z)E_2(5z) &=& \frac{1}{260}E_4(z)+ \frac{9}{260}E_4(3z)+ \frac{5}{52}E_4(5z)+ \frac{45}{52}E_4(15z)\\
&& ~~ - \frac{576}{455}\Delta_{4,5}(z)- \frac{5184}{455}\Delta_{4,5}(3z) -\frac{48}{7} \Delta_{4, 15; 1}(z)  +\frac{36}{5}\Delta_{4, 15; 2}(z) \\
&& ~~~ + \frac{28}{5}D\Phi_{1, 15}(z) - 4D\Phi_{5, 15}(z)+\frac{4}{5}D\Phi_{1, 3}(z) +\frac{4}{5}DE_2(z).
\end{eqnarray*}
By comparing the $n$-th Fourier coefficients, we get the required the convolution sums.
\end{proof}

\subsection{Application to the number of representations}
In this section we apply the convolution sums $W_{15}(n)$ and $W_{3,5}(n)$ to derive the following theorem. Our method of 
proof is similar to that used by Alaca-Alaca-Williams (see for example \cite{aw3, aw1, aw2}). 

\begin{thm}\label{N1}
The number of representations of a positive integer $n$ by the quadratic form $x_1^2+x_1x_2+x_2^2+x_3^2+x_3x_4+x_4^2+5(x_5^2+x_5x_6+x_6^2+x_7^2+x_7x_8+x_8^2)$ is 
equal to 
\begin{eqnarray*}
\frac{12}{13}\sigma_3(n)+\frac{108}{13}\sigma_3\left(\frac{n}{3}\right) + \frac{300}{13} \sigma_3\left(\frac{n}{5}\right) + \frac{2700}{13} \sigma_3\left(\frac{n}{15}\right) + \frac{72}{91} \tau_{4,5}(n) & \\
+ \frac{648}{91} \tau_{4,5}\left(\frac{n}{3}\right) + \frac{72}{7} \tau_{4,15;1}(n).&\\
\end{eqnarray*}
\end{thm}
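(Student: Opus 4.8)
The plan is to realize the theta series of $Q$ as an explicit element of a space of modular forms and then read off its Fourier coefficients. Let $\theta(z) = \sum_{x,y\in{\mathbb Z}} q^{x^2+xy+y^2}$ be the theta series attached to the ternary-type quadratic form $x^2+xy+y^2$, so that it is a modular form of weight $1$ on $\Gamma_0(3)$ with the appropriate character. Then the theta series $\Theta_Q(z) = \sum_{n\ge 0} r_Q(n) q^n$ of the eight-variable form $Q$ in \eqref{Q} factors as $\Theta_Q(z) = \theta(z)^2\,\theta(5z)^2$, which is a modular form of weight $4$ on $\Gamma_0(15)$ with trivial character (the character contributions from $\theta^2$ and $\theta(5z)^2$ each become trivial since we square, and the levels combine to $15$). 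So the first step is to verify that $\Theta_Q \in M_4(\Gamma_0(15))$, a space already shown in the proof of \thmref{w15-35} to have dimension $8$, with the explicit basis
$$
\{E_4(z), E_4(3z), E_4(5z), E_4(15z), \Delta_{4,5}(z), \Delta_{4,5}(3z), \Delta_{4,15;1}(z), \Delta_{4,15;2}(z)\}.
$$

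Next I would expand $\Theta_Q = \theta(z)^2\theta(5z)^2$ as a $q$-series far enough (Sturm's bound for weight $4$ on $\Gamma_0(15)$, i.e. up to $q^{\lfloor 4\cdot i_{15}/12\rfloor}$ coefficients, with $i_{15}=[SL_2({\mathbb Z}):\Gamma_0(15)]=24$, so about $8$ coefficients suffice) and solve the resulting linear system to express $\Theta_Q$ as a ${\mathbb C}$-linear combination of the eight basis elements. This is the step where the convolution sums enter conceptually: one can alternatively write $\theta(z)^2 = $ (an Eisenstein piece) $+ $ (the weight-$2$ cusp contribution involving $\Delta_{2,15}$-type forms), and the product $\theta(z)^2\theta(5z)^2$ then naturally produces terms like $\sum_{l+15m=n}$ and $\sum_{3l+5m=n}$ after using the identity $\theta^2(z) = $ something expressible via $E_2$. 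Concretely, since $\theta(z)^2$ relates to $\varphi$-type Eisenstein series of weight $2$ and level dividing $15$, the product $\Theta_Q$ lies in the span of $E_2(z)E_2(15z)$, $E_2(3z)E_2(5z)$ and lower-level analogues, so the coefficients of $\Theta_Q$ become explicit linear combinations of $W_{15}(n)$, $W_{3,5}(n)$, $W_5(n)$, $W_3(n)$, $\sigma$, and $\sigma_3$. Plugging in the formulas from \thmref{thm:lw} and \thmref{w15-35} and collecting terms should yield the stated closed form; in particular the coefficients $\tau_{4,15;2}(n)$ must cancel between the $W_{15}$ and $W_{3,5}$ contributions (they appear with opposite signs $-\tfrac{1}{80}$ and $+\tfrac{1}{80}$ scaled appropriately), leaving only $\tau_{4,5}(n)$, $\tau_{4,5}(n/3)$ and $\tau_{4,15;1}(n)$ in the cusp part, consistent with the theorem.

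The cleanest route, and the one I would actually write up, is to combine these two observations: determine the Eisenstein part of $\Theta_Q$ by comparing with the known Eisenstein series (this fixes the $\sigma_3$-coefficients $\tfrac{12}{13}, \tfrac{108}{13}, \tfrac{300}{13}, \tfrac{2700}{13}$, which are forced by the values $r_Q(n)$ for small $n$ and the constant term $r_Q(0)=1$), and then identify the cusp-form part — an element of the $4$-dimensional space $S_4(\Gamma_0(15))$ with basis $\{\Delta_{4,5}(z), \Delta_{4,5}(3z), \Delta_{4,15;1}(z), \Delta_{4,15;2}(z)\}$ — by matching a few more coefficients, finding that the $\Delta_{4,15;2}$-coefficient vanishes. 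Equating $r_Q(n)$ with the $n$-th coefficient of this explicit linear combination gives the formula for all $n$ by the dimension count plus Sturm's theorem.

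The main obstacle is bookkeeping rather than conceptual: one must correctly pin down the character and level of $\theta(z)^2\theta(5z)^2$ (it is essential that squaring kills the quadratic character so we land in $M_4(\Gamma_0(15))$ with trivial nebentypus, not in a space with character), and then carry out the linear algebra over ${\mathbb Q}$ accurately enough that the delicate cancellation of the $\tau_{4,15;2}$-terms actually occurs. A secondary subtlety is making sure the $q$-expansions are computed to at least the Sturm bound so that the identity of modular forms — and hence the identity of representation numbers for \emph{all} $n$ — is rigorously justified rather than merely checked for small $n$.
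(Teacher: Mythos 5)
Your proposal is correct, but the route you say you would actually write up is genuinely different from the paper's. The paper argues entirely at the level of arithmetic identities: it uses the classical formula $r(l)=12\sigma(l)-36\sigma(l/3)$ for the number of representations by $x_1^2+x_1x_2+x_2^2+x_3^2+x_3x_4+x_4^2$, expands $N(n)=\sum_{l+5m=n}r(l)r(m)$ into the four convolution sums $144\,W_5(n)-432\,W_{15}(n)-432\,W_{3,5}(n)+1296\,W_5(n/3)$ plus boundary terms, and then substitutes Theorems \ref{thm:lw} and \ref{w15-35}; the cancellation of the $\tau_{4,15;2}$ terms you predict is exactly what happens there, since $W_{15}$ and $W_{3,5}$ enter with equal weight $-432$ and carry $\mp\tfrac{1}{80}\tau_{4,15;2}(n)$ respectively. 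Your preferred argument instead identifies $\Theta_Q=\theta(z)^2\theta(5z)^2$ directly as an element of the $8$-dimensional space $M_4(\Gamma_0(15))$ and pins it down in the explicit basis by linear algebra up to the Sturm bound ($\tfrac{4}{12}\cdot 24=8$), which is rigorous and arguably shorter, but it sidesteps the convolution sums entirely --- and exhibiting those sums as the engine of the computation is the stated point of the paper. Your middle paragraph in fact bridges the two: since $\theta(z)^2=\Phi_{1,3}(z)$ is a weight-$2$ Eisenstein series, writing $\Theta_Q=\Phi_{1,3}(z)\Phi_{1,3}(5z)$ and expanding the $E_2$-products reproduces precisely the paper's four convolution sums. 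The only imprecision worth flagging is the suggestion that the Eisenstein coefficients are ``forced by the constant term and small $n$'' in isolation; one must solve the full $8$-unknown system (or separate the Eisenstein part by cusp expansions), as you do acknowledge when you invoke the linear system and Sturm's theorem. Either way the proof closes.
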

\begin{proof}
Let $ {\mathbb N}_0={\mathbb N} \cup \{0\}.$ For $l \in \mathbb N_0, $ let 
$$
r(l)=\# \left\{(x_1, x_2, x_3, x_4) \in \mathbb Z^4 | x_1^2+x_1x_2+x_2^2+x_3^2+x_3x_4+x_4^2=l \right\}
$$
so that $r(0)=1$.   For $l \in \mathbb N, $ we know that (see  \cite{huard})
$$
r(l)=12 \sum_{d| l, \atop 3 \not| d} d= 12 \sigma(l)-36 \sigma\left(\frac{l}{3}\right).
$$
Let $N(n)$ be the number of representations of the given quadratic form $Q$ defined by \eqref{Q}. 
Then $N(n)$ is given by 
\begin{eqnarray*}
N(n) &=&\sum_{l. m \in \mathbb N_0 \atop l +5m=n} \big( \sum_{(x_1, x_2, x_3, x_4) \in \mathbb Z^4 \atop{x_1^2+x_1x_2+x_2^2+x_3^2+x_3x_4+x_4^2=l }}  1\big)
 \big( \sum_{(x_5, x_6, x_7, x_8) \in \mathbb Z^4 \atop{x_5^2+x_5x_6+x_6^2+x_7^2+x_7x_8+x_8^2=m}} 1\big) \\
 &=& r(0)r\left(\frac{n}{5}\right) + r(n)r(0)+ \sum_{l, m \in \mathbb N \atop{l+5m=n}} r(l)r(m)\\
&=&12\sigma\left(\frac{n}{5}\right)-36\sigma\left(\frac{n}{15}\right)+12\sigma(n)-36\sigma\left(\frac{n}{3}\right)\\
 && \quad + \sum_{l, m \in \mathbb N \atop{l+5m=n}} \left(12 \sigma(l)-36 \sigma\left(\frac{l}{3}\right) \right) \left(    12 \sigma(m)-36 \sigma\left(\frac{m}{3}\right)   \right) \\
 &=& 12\sigma\left(\frac{n}{5}\right) - 36\sigma\left(\frac{n}{15}\right)+12\sigma(n) - 36\sigma\left(\frac{n}{3}\right) 
+ 144\sum_{l, m \in \mathbb N \atop{l+5m=n}} \sigma(l) \sigma(m) \\
&& ~~ - 432  \sum_{l, m \in \mathbb N \atop{l+5m=n}} \!\!\sigma(l) \sigma\left(\frac{m}{3}\right) 
- 432  \sum_{l, m \in \mathbb N \atop{l+5m=n}}  \!\!\sigma\left(\frac{l}{3}\right) \sigma(m)\\
&& \qquad +1296  \sum_{l, m \in \mathbb N \atop{l+5m=n}} 
\!\!\sigma\left(\frac{l}{3}\right)  \sigma\left(\frac{m}{3}\right)\\
 &=&12\sigma\left(\frac{n}{5}\right)-36\sigma\left(\frac{n}{15}\right)+12\sigma(n)-36\sigma\left(\frac{n}{3}\right)\\
  &&\quad + 144~ W_5(n)-432~ W_{15}(n) - 432~ W_{3, 5}(n)+1296~ W_5\left(\frac{n}{3}\right).
\end{eqnarray*}
Substituting the convolution sums using \thmref{thm:lw} and \thmref{w15-35}, we get the required formula for $N(n).$
\end{proof}

\subsection{More applications}
Let  $Q_k$ be the quadratic form  $x_1^2 + x_2^2+x_3^2+x_4^2 + k (x_5^2+x_6^2+x_7^2+x_8^2)$ and $N_k(n)$ be the number of 
representations of integers $n\ge 1$ by $Q_k$. In this section we use the convolution sums derived in \cite{{aw3}, {aw4}} to derive a 
formula for $N_6(n)$. We note that for $k=2,3,4$ similar formulas were obtained earlier by Williams \cite{w2},  Alaca-Williams \cite{aw7} and 
Alaca-Alaca-Williams \cite{aw5} respectively. As mentioned in the introduction, we learnt from the referee that the evaluation of $N_6(n)$ has also been derived recently by K\"okl\"uce. 
To find $N_6(n)$ using our method, we need the convolution sums $W_6(n)$, $W_{2,3}$ and 
$W_{24}(n)$ which were derived by Alaca-Alaca-Williams and they are given in the following theorem. 

\begin{thm}\label{williams} {\rm( cf. \cite{{aw3}, {aw4}})}
\begin{eqnarray*}
W_6(n) &=& \frac{1}{120}\sigma_3({n})+\frac{1}{30}\sigma_3\left(\frac{n}{2}\right)+\frac{3}{40}\sigma_3\left(\frac{n}{3}\right)+\frac{3}{10}\sigma_3\left(\frac{n}{6}\right)\\
 && \quad + \frac{1-n}{24}\sigma(n)+\frac{1-6n}{24}\sigma\left(\frac{n}{6}\right)-\frac{1}{120} c_{6}(n),\\
W_{2, 3}(n) &=&\frac{1}{120}\sigma_3({n})+\frac{1}{30}\sigma_3\left(\frac{n}{2}\right)+\frac{3}{40}\sigma_3\left(\frac{n}{3}\right)+\frac{3}{10}\sigma_3\left(\frac{n}{6}\right)\\
 && \quad + \frac{1-2n}{24}\sigma\left(\frac{n}{2}\right)+\frac{1-3n}{24}\sigma\left(\frac{n}{3}\right)-\frac{1}{120} c_{6}(n),\\
W_{24}(n) &=& \frac{1}{1920}\sigma_3(n)+\frac{1}{640} \sigma_3\left(\frac{n}{2}\right) + \frac{3}{640} \sigma_3\left(\frac{n}{3}\right)
+\frac{1}{160}\sigma_3\left(\frac{n}{4}\right)\\
&& \quad + \frac{9}{640}\sigma_3\left(\frac{n}{6}\right)+ \frac{1}{30}\sigma_3\left(\frac{n}{8}\right)+\frac{9}{160}\sigma_3\left(\frac{n}{12}\right)+  \frac{3}{10}\sigma_3\left(\frac{n}{24}\right) \\
 &&\quad + \frac{4-n}{96}\sigma(n)+\frac{1-6n}{24}\sigma\left(\frac{n}{24}\right)-\frac{61}{1920} c_{1, 24}(n),
\end{eqnarray*}
where $c_6(n)$ and $c_{1,24}(n)$ are the $n$-th Fourier coefficients of weight $4$ normalized newforms which are given in \cite[p. 492]{aw3} and  \cite[p. 94]{aw4} respectively. 
\end{thm}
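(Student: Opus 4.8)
Since this theorem merely records results of Alaca--Alaca--Williams \cite{aw3,aw4}, the shortest route is to quote those papers; but all three identities fall out of exactly the machinery used to prove \thmref{w15-35}, and that is the proof I would write. The plan is to apply the structure theorem \eqref{structure} with $k=4$ at levels $N=6$ and $N=24$, express the products $E_2(z)E_2(Nz)$ (and $E_2(2z)E_2(3z)$ at level $6$) in a fixed basis of $\tilde{M}_4^{\le 2}(\Gamma_0(N))$, and read off the coefficient of $q^n$, using that $E_2(z)=1-24\sum_{n\ge1}\sigma(n)q^n$ makes the $q^n$-coefficient of $E_2(z)E_2(Nz)$ equal to $576\,W_N(n)-24\sigma(n)-24\sigma(n/N)$, and that of $E_2(2z)E_2(3z)$ equal to $576\,W_{2,3}(n)-24\sigma(n/2)-24\sigma(n/3)$.

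For $W_6(n)$ and $W_{2,3}(n)$, \eqref{structure} gives
$$
\tilde{M}_4^{\le 2}(\Gamma_0(6)) = M_4(\Gamma_0(6)) \oplus DM_2(\Gamma_0(6)) \oplus {\mathbb C}DE_2,
$$
where $\dim M_4(\Gamma_0(6))=5$ with a one-dimensional cusp-form space spanned by the newform $c_6=\eta(z)^2\eta(2z)^2\eta(3z)^2\eta(6z)^2$, and $\dim M_2(\Gamma_0(6))=3$ with no cusp forms since $X_0(6)$ has genus $0$. Thus $\{E_4(z),E_4(2z),E_4(3z),E_4(6z),c_6\}$ is a basis of $M_4(\Gamma_0(6))$ and $\{\Phi_{1,6}(z),\Phi_{2,6}(z),\Phi_{3,6}(z)\}$ (with $\Phi_{a,b}$ as in the proof of \thmref{w15-35}) a basis of $M_2(\Gamma_0(6))$. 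Expanding $E_2(z)E_2(6z)$ and $E_2(2z)E_2(3z)$ in these nine functions — matching Fourier coefficients up to Sturm's bound pins down the coefficients uniquely — and comparing $q^n$-coefficients yields the two stated formulas, with $c_6(n)$ the $n$-th coefficient of $c_6$.

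The case $N=24$ is identical in spirit but heavier: $\tilde{M}_4^{\le 2}(\Gamma_0(24)) = M_4(\Gamma_0(24)) \oplus DM_2(\Gamma_0(24)) \oplus {\mathbb C}DE_2$, with $\dim M_4(\Gamma_0(24))=16$ and $\dim M_2(\Gamma_0(24))=8$. I would take the eight Eisenstein series $E_4(dz)$, $d\mid 24$, together with eight weight-$4$ cusp forms — the newform $c_{1,24}$ plus oldforms pulled back from levels $6$, $8$, $12$ (images of the corresponding newforms under $z\mapsto dz$) — as a basis of $M_4(\Gamma_0(24))$, and analogously for $M_2(\Gamma_0(24))$ (seven $\Phi$-type Eisenstein series plus the weight-$2$ newform $\eta(2z)\eta(4z)\eta(6z)\eta(12z)$). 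Expressing $E_2(z)E_2(24z)$ in this basis and isolating the $q^n$-coefficient gives the formula for $W_{24}(n)$.

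The main obstacle is not conceptual but the bookkeeping at level $24$: one must write down explicit bases of the cusp-form spaces $S_4(\Gamma_0(24))$ and $S_2(\Gamma_0(24))$ — most cleanly as eta-quotients — verify their modularity via the Ligozat/Newman conditions, certify via Sturm's bound (against the LMFDB tables) that the asserted forms really are the newforms, and then solve a $16$-dimensional linear system for the expansion of $E_2(z)E_2(24z)$. Once these bases are fixed, every remaining step is the same routine coefficient comparison as in \thmref{w15-35}.
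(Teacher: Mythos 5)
The paper gives no proof of Theorem \ref{williams} at all: it is quoted verbatim from Alaca--Alaca--Williams \cite{aw3,aw4}, where the identities are established by elementary/theta-function methods in the Huard--Ou--Spearman--Williams tradition rather than by quasimodular forms. Your re-derivation via the structure theorem \eqref{structure} is therefore a genuinely different route from the cited source, and it is the natural one given the methodology of \thmref{w15-35}. Your dimension counts are all correct ($\dim M_4(\Gamma_0(6))=5$ with a one-dimensional cusp space spanned by $\eta^2(z)\eta^2(2z)\eta^2(3z)\eta^2(6z)$, $\dim M_2(\Gamma_0(6))=3$ with no cusp forms; $\dim M_4(\Gamma_0(24))=16$, $\dim M_2(\Gamma_0(24))=8$ including the genus-one weight-$2$ newform), as is the translation of the $q^n$-coefficient of $E_2(az)E_2(bz)$ into $576\,W_{a,b}(n)-24\sigma(n/a)-24\sigma(n/b)$. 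For level $6$ the sketch is complete modulo the Sturm-bounded linear algebra, and what you gain over the citation is a uniform, mechanical proof in the same framework as the rest of the paper; what you lose is brevity, since the level-$24$ linear system is sixteen-dimensional.

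The one place where "routine bookkeeping" hides a substantive check is the cuspidal part at level $24$. Your basis of $\tilde{M}_4^{\le 2}(\Gamma_0(24))$ contains nine cuspidal members: the eight weight-$4$ cusp forms (one newform plus seven oldforms from levels $6$, $8$, $12$) together with $D\bigl(\eta(2z)\eta(4z)\eta(6z)\eta(12z)\bigr)$ inside $DM_2(\Gamma_0(24))$, whose $q^n$-coefficient is $n$ times a newform coefficient. The quoted formula for $W_{24}(n)$ carries a single cuspidal term $-\tfrac{61}{1920}c_{1,24}(n)$ and no $c_6(n/2)$, $c_6(n/4)$, $c_8(n/3)$, $c_{12}(n/2)$ or $n\,a(n)$ contributions. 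Compare the level-$12$ case, where the analogous expansion does produce two independent cusp-form terms (as the paper's own remark on $N_3(n)$ records), so such coefficients do not vanish automatically. Hence in your derivation you must either verify that eight of the nine cuspidal coefficients of $E_2(z)E_2(24z)$ vanish, or recognize that the $c_{1,24}$ of \cite{aw4} is the coefficient sequence of one specific cusp form (an eta-type combination) that absorbs the whole cuspidal part. This is a finite, checkable computation rather than an error, but it is the step your sketch asserts rather than proves.
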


In the following we use \thmref{williams} to derive a formula for $N_6(n)$. 
\begin{thm}\label{n6}
The number of representations of a positive integer $n$ by the quadratic form $Q_6$ is given by 
\begin{eqnarray*}
N_6(n) &=& \frac{2}{5} \sigma_3(n) - \frac{2}{5} \sigma_3\left(\frac{n}{2}\right) + \frac{18}{5} \sigma_3\left(\frac{n}{3}\right) - 
\frac{8}{5} \sigma_3\left(\frac{n}{4}\right) - \frac{18}{5} \sigma_3\left(\frac{n}{6}\right)  \\
&& ~~ + \frac{128}{5} \sigma_3\left(\frac{n}{8}\right)  - \frac{72}{5} \sigma_3\left(\frac{n}{12}\right) 
+ \frac{1152}{5} \sigma_3\left(\frac{n}{24}\right) - \frac{8}{15} c_6(n) \\
&& ~~ + \frac{32}{15} c_6\left(\frac{n}{2}\right) - \frac{128}{15} c_6\left(\frac{n}{4}\right) + \frac{122}{15} c_{1,24}(n).  \\
\end{eqnarray*}
\end{thm}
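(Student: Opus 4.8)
The plan is to follow the argument in the proof of \thmref{N1}, with the two ternary forms replaced by two copies of the sum of four squares. For $l\in{\mathbb N}_0$ put
$$
r_4(l) = \#\{(x_1,x_2,x_3,x_4)\in{\mathbb Z}^4 : x_1^2+x_2^2+x_3^2+x_4^2=l\},
$$
so that $r_4(0)=1$ and, by Jacobi's four-square theorem, $r_4(l) = 8\sigma(l) - 32\sigma(l/4)$ for $l\ge 1$. Splitting off the two boundary contributions,
$$
N_6(n) = \sum_{\substack{l,m\in{\mathbb N}_0\\ l+6m=n}} r_4(l)\,r_4(m) = r_4(n) + r_4\!\left(\tfrac{n}{6}\right) + \sum_{\substack{l,m\in{\mathbb N}\\ l+6m=n}} r_4(l)\,r_4(m),
$$
with the convention that a term vanishes whenever its argument is not a positive integer.

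I would then expand $r_4(l)\,r_4(m) = 64\,\sigma(l)\sigma(m) - 256\,\sigma(l)\sigma(\tfrac{m}{4}) - 256\,\sigma(\tfrac{l}{4})\sigma(m) + 1024\,\sigma(\tfrac{l}{4})\sigma(\tfrac{m}{4})$ inside the sum and reduce each of the four resulting convolution sums to one of those evaluated in \thmref{williams}: the first equals $W_6(n)$; the substitution $m=4m'$ turns the second into $W_{24}(n)$; the substitution $l=4l'$ turns the third into $W_{2,3}(n/2)$; and $l=4l'$, $m=4m'$ turn the fourth into $W_6(n/4)$. This gives
$$
N_6(n) = r_4(n) + r_4\!\left(\tfrac{n}{6}\right) + 64\,W_6(n) - 256\,W_{24}(n) - 256\,W_{2,3}\!\left(\tfrac{n}{2}\right) + 1024\,W_6\!\left(\tfrac{n}{4}\right).
$$

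It remains to insert the evaluations of $W_6$, $W_{24}$ and $W_{2,3}$ from \thmref{williams} (in the last two convolution sums with $n$ replaced by $n/2$ and $n/4$), together with $r_4(n) = 8\sigma(n)-32\sigma(n/4)$ and $r_4(n/6) = 8\sigma(n/6)-32\sigma(n/24)$, and to collect coefficients. The cusp-form terms fall out immediately: $64\cdot(-\tfrac{1}{120}) = -\tfrac{8}{15}$ for $c_6(n)$, $-256\cdot(-\tfrac{1}{120}) = \tfrac{32}{15}$ for $c_6(n/2)$, $1024\cdot(-\tfrac{1}{120}) = -\tfrac{128}{15}$ for $c_6(n/4)$, and $-256\cdot(-\tfrac{61}{1920}) = \tfrac{122}{15}$ for $c_{1,24}(n)$, matching the statement; combining the coefficients of $\sigma_3(n/d)$ over the divisors $d\mid 24$ in the same way produces the eight $\sigma_3$-terms.

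The main obstacle is purely the bookkeeping in this last step, and especially the check that the weight-two terms cancel. The surviving $\sigma$-terms of $64\,W_6(n)$, $-256\,W_{24}(n)$, $-256\,W_{2,3}(n/2)$ and $1024\,W_6(n/4)$, together with the $\sigma$-terms of $r_4(n)$ and $r_4(n/6)$, involve only the arguments $n$, $n/4$, $n/6$ and $n/24$, and for each of these four arguments the rational coefficients must add to $0$ identically in $n$ --- for instance, for $\sigma(n)$ one gets $8 + \tfrac{64(1-n)}{24} - \tfrac{256(4-n)}{96} = 0$. This cancellation is forced, since the theta series of $Q_6$ is a genuine modular form of weight $4$ on $\Gamma_0(24)$ and so has no quasimodular ($E_2$) component; it is the natural consistency check that the coefficients have been combined correctly, after which the remaining $\sigma_3$- and cusp-form terms are precisely the claimed expression.
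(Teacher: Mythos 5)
Your proof is correct and is essentially identical to the paper's: the same Jacobi formula for $r_4$, the same splitting of boundary terms, the same reduction of the four convolution sums to $W_6(n)$, $W_{24}(n)$, $W_{2,3}(n/2)$ and $W_6(n/4)$, followed by substitution from \thmref{williams}. Your observation that the weight-two terms must cancel because the theta series of $Q_6$ has no quasimodular component is a nice consistency check not made explicit in the paper, but the argument is otherwise the same.
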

\begin{proof}
For $l \in \mathbb N_0, $ let 
$$
r_4(l)=\# \left\{(x_1, x_2, x_3, x_4) \in \mathbb Z^4 | x_1^2+x_2^2+x_3^2+x_4^2=l \right\}
$$
so that $r(0)=1$.   For $l \in \mathbb N, $ we know the formula due to Jacobi  (see  \cite{grosswald})
$$
r_4(l)=8 \sum_{d| l, \atop 4 \not| d} d= 8 \sigma(l)-32 \sigma\left(\frac{l}{4}\right).
$$
Then $N_6$ is given by 
\begin{eqnarray*}
N_6(n) &=&\sum_{l. m \in \mathbb N_0 \atop l +6m=n} \big( \sum_{(x_1, x_2, x_3, x_4) \in \mathbb Z^4 
\atop{x_1^2+x_2^2+x_3^2+x_4^2=l }}  1\big)
 \big( \sum_{(x_5, x_6, x_7, x_8) \in \mathbb Z^4 \atop{x_5^2+x_6^2+x_7^2+x_8^2=m}} 1\big) \\
 &=& r_4(0)r_4\left(\frac{n}{6}\right) + r_4(n)r_4(0)+ \sum_{l, m \in \mathbb N \atop{l+6m=n}} r_4(l)r_4(m)\\
 &=&8\sigma\left(n\right) -32 \sigma\left(\frac{n}{4}\right)+8\sigma\left(\frac{n}{6}\right) - 32\sigma\left(\frac{n}{24}\right)\\
 &&\quad + \sum_{l, m \in \mathbb N \atop{l+6m=n}} \left(8 \sigma(l)-32 \sigma\left(\frac{l}{4}\right) \right) \left( 8 \sigma(m)-32 \sigma\left(\frac{m}{4}\right)   \right) \\
&=&8\sigma\left(n\right) -32 \sigma\left(\frac{n}{4}\right)+8\sigma\left(\frac{n}{6}\right) - 32\sigma\left(\frac{n}{24}\right) + 64\sum_{l, m \in \mathbb N \atop{l+6m=n}} \sigma(l) \sigma(m) \\
&& \quad  - 256  \sum_{l, m \in \mathbb N \atop{l+6m=n}} \!\!\sigma(l) \sigma\left(\frac{m}{4}\right) 
- 256  \sum_{l, m \in \mathbb N \atop{l+6m=n}}  \!\!\sigma\left(\frac{l}{4}\right) \sigma(m)\\
&& \qquad +1024  \sum_{l, m \in \mathbb N \atop{l+6m=n}} 
\!\!\sigma\left(\frac{l}{4}\right)  \sigma\left(\frac{m}{4}\right)\\
 &=&8\sigma\left(n\right) -32 \sigma\left(\frac{n}{4}\right)+8\sigma\left(\frac{n}{6}\right) - 32\sigma\left(\frac{n}{24}\right) + 64 ~W_6(n) \\
&& \qquad + 1024~ W_6\left(\frac{n}{4}\right)  -256~ W_{24}(n) - 256~ W_{2, 3}\left(\frac{n}{2}\right). 
\end{eqnarray*}
Substituting the convolution sums from \thmref{williams} in the above gives the required formula for $N_6(n)$.
\end{proof}
\begin{rmk}
{\rm 
The representation numbers $N_k(n)$ for $k=2,4$ were obtained by Williams \cite{w2} and by Alaca-Alaca-Williams \cite{aw5} using the 
convolution sums $W_{1,8}(n)$, $W_{1,16}(n)$  and for $k=3$ it was derived by Alaca-Williams \cite{aw7} as a consequence of the representation of positive integers by certain octonary quadratic forms. Note that $N_3(n)$ can also be obtained  in a similar way as done in 
the cases $k=2,4$. In fact, 
\begin{eqnarray}
N_3(n) &=& 8\sigma(n)-32\sigma\left(\frac{n}{4}\right)+8\sigma\left(\frac{n}{3}\right)  -32\sigma\left(\frac{n}{12}\right) \\
&& \quad + 64~ W_3(n) + 1024~ W_3\left(\frac{n}{4}\right) - 256~ W_{12}(n) - 256~ W_{3, 4}(n).\nonumber 
\end{eqnarray}
Using the convolution sums $W_3(n)$, $W_{3,4}(n)$ and $W_{12}(n)$ obtained in \cite{{huard}, {aw1}}, we have the following formula for 
$N_3(n)$:
\begin{eqnarray}\label{n3}
N_3(n) & = & \frac{8}{5} ~\sigma_3(n) - \frac{16}{5} ~\sigma_3\left(\frac{n}{2}\right)  + \frac{72}{5} ~\sigma_3\left(\frac{n}{3}\right) 
+ \frac{128}{5} ~\sigma_3\left(\frac{n}{4}\right) \\
&& ~~ - \frac{144}{5}~ \sigma_3\left(\frac{n}{6}\right)  + \frac{1152}{5}~ \sigma_3\left(\frac{n}{12}\right) + \frac{88}{15}~ c_{1,12}(n) 
+ \frac{8}{15}~ c_{3,4}(n). \nonumber 
\end{eqnarray}
The difference between the formula given in \cite[Theorem 1.1 (ii)]{aw7} and \eqref{n3} is due to different cusp forms used.  In \cite{aw7} 
coefficients of the newform of weight $4$ and level $6$ appear while in the above formula Fourier coefficients of two cusp forms of weight $4$ and level $12$ appear. 
}
\end{rmk}


\noindent {\bf Acknowledgements}. ~We have used the open-source mathematics software SAGE (www.sagemath.org) to do our calculations. 
The work was done during the second author's visit to the Harish-Chandra Research Institute (HRI), Allahabad. He wishes to thank HRI 
for the warm hospitality during his stay. Finally, the authors thank the referee for his/her useful suggestions.

\end{document}